\newtheorem{theorem}{Theorem}[section]
\newtheorem{lemma}{Lemma}[section]
\numberwithin{equation}{section}
\newcommand{\E}{\mathsf{E}}
\renewcommand{\b}[1]{\mbox{\boldmath $#1$}}
\theoremstyle{definition}
\definecolor{gray}{rgb}{0.9,0.9,0.9}
\begin{document}

\title[] {Bene$\check{\bf S}$ condition for discontinuous
exponential martingale}
\author{R. Liptser}
\address{Department of Electrical Engineering Systems,
Tel Aviv University, 69978 Tel Aviv, Israel}
\email{liptser@eng.tau.ac.il; rliptser@gmail.com}

\keywords{Girsanov, exponential martingale, uniform integrability}
\subjclass{60G45, 60G46 }


\maketitle
\begin{abstract}
It is known the Girsanov exponent $\mathfrak{z}_t$, being solution
of Doleans-Dade equation $
\mathfrak{z_t}=1+\int_0^t\alpha(\omega,s)dB_s $ generated by
Brownian motion $B_t$ and a random process $\alpha(\omega,t)$ with
$\int_0^t\alpha^2(\omega,s)ds<\infty$ a.s., is the martingale
provided that the Bene${\rm \check{s}}$ condition
$$
|\alpha(\omega,t)|^2\le \text{\rm
const.}\big[1+\sup_{s\in[0,t]}B^2_s\big], \ \forall \ t>0,
$$
holds true. In this paper, we show $B_t$ can be replaced by by a
homogeneous purely discontinuous square integrable martingale $M_t$
with independent increments and paths from the Skorokhod space $
\mathbb{D}_{[0,\infty)} $ having positive jumps $\triangle M_t$ with
$\E\sum_{s\in[0,t]}(\triangle M_s)^3<\infty$. A function
$\alpha(\omega,t)$ is assumed to be nonnegative and predictable.
Under this setting $\mathfrak{z}_t$ is the martingale provided that
$$
\alpha^2(\omega,t)\le \text{\rm
const.}\big[1+\sup_{s\in[0,t]}M^2_{s-}\big], \ \forall \ t>0.
$$
The method of proof differs from  the original  Bene${\rm
\check{s}}$ one and is compatible for both setting with $B_t$ and
$M_t$.

\end{abstract}

\section{\bf Introduction and main result}

\subsection{Setting of problem}A classical Girsanov's exponent
\begin{equation*}
\mathfrak{z}_t=\exp\Big(\int_0^t\alpha(\omega,s)dB_s-\frac{1}{2}\int_0^t\alpha^2(\omega,s)ds
\Big),
\end{equation*}
with Brownian motion $B_t$ and adapted random process
$\alpha(\omega,t)$ having $\int_0^t\alpha(\omega,s)ds<\infty$, forms
a positive local martingale (and supermartingale too) with
$\E\mathfrak{z}_t\le 1$. If
\begin{equation}\label{eq:1+1+1}
\E\mathfrak{z}_t\equiv 1,
\end{equation}
the random process $\mathfrak{z}_t$ is a martingale. In order
\eqref{eq:1+1+1} to have, Girsanov in \cite{Girs} used bounded
function $\alpha(\omega,t)$  and suggested a conjecture that
\eqref{eq:1+1+1} will be valid if $\alpha^2(\omega,t)\approx B^2_t$.
Among  conditions guaranteing \eqref{eq:1+1+1} (see, e.g. Novikov
\cite{12}, Kazamaki \cite{7}, the latest Krylov \cite{Kry}, etc), we
distinguish the Bene${\rm \check{s}}$ statement: for any $T>0$,
\begin{equation}\label{eq:Benes}
``|\alpha(\omega,t)|^2\le \text{\rm
const.}\big[1+\sup_{s\in[0,t]}B^2_s\big]_{t\in[0,T]}\text{''}\Rightarrow
``\E\mathfrak{z}_T=1\text{''},
\end{equation}
which is derived in \cite{Benes} with the help of Kazamaki \cite{7}
(see also Karatzas and Shreve \cite{142}, "Ust\"unel and Zakai
\cite{UZ}).

The aim of this paper is to ``replace'' the Brownian motion $B_t$ by
a homogeneous purely discontinuous square integrable martingale
$M_t$ with independent increments and obtain an implication similar
to \eqref{eq:Benes}. Unfortunately, a method of proof of
\eqref{eq:Benes} with $M_t$ instead of $B_t$ is not applicable since, for example, results
of Kazamaki, Novikov, and  Krylov related to martingale $M_t$ do not exist.

\subsection{New approach to Bene${\bf \check{s}}$  result}\label{sec-1.2}
We propose a new  approach for the proof of \eqref{eq:Benes} which
is compatible with $B_t$ and $M_t$.
Its application to a discontinuous martingale is more involved than to classical case
with Brownian motion. So, to make our approach at most transparent we give
a sketch of the proof of \eqref{eq:Benes} step by step.

{\bf (1)} It is
well known $ \mathfrak{z}_t $ is the unique solution of
Dol\'eans-Dade equation

\begin{equation}\label{eq:DDz+}
\mathfrak{z}_t=1+\int_0^t\mathfrak{z}_s\alpha(\omega,s)dB_s.
\end{equation}
Set $\sigma_n=\inf\Big\{t:\Big[1+\sup_{s\in[0,t]}B^2_s\Big]\ge
n\Big\}$, $B^n_t=B_{t\wedge \sigma_n}$ and
$\mathfrak{z}^n_t=\mathfrak{z}_{t\wedge\sigma_n}$. Then
$$
\mathfrak{z}^n_t=1+\int_0^t\mathfrak{z}^n_sI_{\{\sigma_n\ge
s\}}\alpha(\omega,s)dB_s.
$$
Evidently,
$
I_{\{\sigma_n\ge s\}}\alpha^2(\omega,s)\le \text{const.}n. $
Consequently, $ \E\mathfrak{z}^n_t\equiv 1. $

{\bf (2)} In order to prove
$\E\mathfrak{z}_t=1$ for any $t\in[0,T]$, it suffices to show the
family $\{\mathfrak{z}^n_T\}_{n\to\infty}$ is uniformly integrable.
With chosen $\sigma_n$, we have $\E\mathfrak{z}^n_T=1$. Let us
introduce a probability measure
$\widetilde{\mathsf{P}}^n_T\ll\mathsf{P}$ with $
d\widetilde{\mathsf{P}}^n_T=\mathfrak{z}^n_Td\mathsf{P} $ and denote
by $\widetilde{\E}^n_T$ the expectation symbol of $
\widetilde{\mathsf{P}}^n_T. $

Following Hitsuda \cite{Hits}, the uniform integrability of
$\{\mathfrak{z}^n_T\}_{n\to\infty}$ is verified with a convex
function
\begin{equation}\label{eq:psiips}
\psi(x)=x\log(x)+1-x, \ x\ge 0
\end{equation}
due to the Vall\'ee-Poussin's criteria since
$\lim_{x\to\infty}\frac{\psi(x)}{x}=\infty.$
Namely, we have to show that $\sup_n\E\psi(\mathfrak{z}^n_T)<\infty.$
A verification of this condition is inconvenient. However if
$\E\psi(\mathfrak{z}^n_T)<\infty$
a direct computation shows
$
\E\psi(\mathfrak{z}^n_T)=\widetilde{\E}^n_T\log\big(\mathfrak{z}^n_T\big).
$

{\bf (3)} the Girsanov theorem, a random process
$(\widetilde{B}^n_t)_{t\in[0,T]}$ with

\begin{equation}\label{eq:BBB}
\widetilde{B}^n_t=B^n_t-\int_0^tI_{\{\sigma_n\ge
s\}}\alpha(\omega,s)ds
\end{equation}
is $\widetilde{\mathsf{P}}^n_T$-martingale with the predictable
quadratic variation $ \langle \widetilde{B}^n\rangle_t= \langle
B^n\rangle_t\equiv t\wedge\sigma_n.$
Therefore, by using \eqref{eq:BBB}, we obtain
$$
\log(\mathfrak{z}^n_t)\stackrel{\widetilde{\mathsf{P}}^n_T}{=}\int_0^tI_{\{\sigma_n\ge
s\}}\alpha(\omega,s) d\widetilde{B}^n_s +\frac{1}{2}
\int_0^tI_{\{\sigma_n\ge s\}}\alpha^2(\omega,s)ds.
$$
Since
$
I_{\{\sigma_n\ge s\}}\alpha^2(\omega,s)
$
is bounded, we have
$
\widetilde{\E}^n_T\int_0^TI_{\{\sigma_n\ge s\}}\alpha^2(\omega,s)ds\le c_n
$
with a constant $c_n$ depending on $n$. Hence,
$
\widetilde{\E}^n_T\int_0^tI_{\{\sigma_n\ge
s\}}\alpha(\omega,s) d\widetilde{B}^n_s=0.
$

Therefore and in view of
$
I_{\{\sigma_n\ge s\}}\alpha^2(\omega,s)ds\le \text{const.}[1+\sup_{s'\in[0,s\wedge\sigma_n]}
|B^n_{s'}|^2],
$

$$
\widetilde{\E}^n_T\log(\mathfrak{z}^n_T)=\widetilde{\E}^n_T\int_0^TI_{\{\sigma_n\ge s\}}
\alpha^2(\omega,s)ds
\\
\le \text{const.}\Big[1+\widetilde{\E}^n_T \sup_{s'\in[0,T\wedge\sigma_n]}
|B^n_{s'}|^2\Big].
$$

{\bf (4)} Now, the proof is reduced to
$
\sup_n\widetilde{\E}^n_T \sup_{s'\in[0,T\wedge\sigma_n]}|B^n_{s'}|^2<\infty.
$

Denote $V^n_t:=\widetilde{\E}^n_T \sup_{s'\in[0,t\wedge\sigma_n]}
|B^n_{s'}|^2$. In view of \eqref{eq:BBB},
\begin{gather*}
V^n_t\le 2\widetilde{\E}^n_T\Big(\int_0^tI_{\{\sigma_n\ge
s\}}|\alpha(\omega,s)|ds\Big)^2+2\widetilde{\E}^n_T\Big(\sup_{s'\in[0,t\wedge\sigma_n]}
|\widetilde{B}^n_{s'}|\Big)^2.
\end{gather*}
By the Doob maximal inequality,
$
\widetilde{\E}^n_T\Big(\sup\limits_{s'\in[0,t\wedge\sigma_n]}
|\widetilde{B}^n_{s'}|\Big)^2\le 4\widetilde{\E}^n_T|\widetilde{B}^n_{t\wedge\sigma_n}|^2
=4\widetilde{\E}^n_T|(t\wedge\sigma_n)\le 4T,
$
while by the Cauchy-Schwarz inequality
$$
\widetilde{\E}^n_T\Big(\int_0^tI_{\{\sigma_n\ge
s\}}|\alpha(\omega,s)|ds\Big)^2\le \widetilde{\E}^n_T\int_0^tI_{\{\sigma_n\ge
s\}}\alpha^2(\omega,s)ds\le\mathbf{r}\Big[1+\int_0^tV^n_s\Big].
$$
Finally, combining these estimates,  we obtain an integral inequality (with the constant
$\mathbf{r}$ independent of $n$):
$$
V^n_t\le \mathbf{r}\Big[1+\int_0^tV^n_sds\Big].
$$

Thus, $V^n_T\le \mathbf{r}e^{T\mathbf{r}}$.

\subsection{Formulation of main result}

Let us explain how Brownian $B_t$ might be replaced by a purely
discontinuous martingale $M_t$. A simplest way is to replace $B_t$
by $M_t$ in the the Dol\'eans-Dade equation \eqref{eq:DDz+}, that
is,
\begin{equation*}
\mathfrak{z}_t=1+\int_0^t\mathfrak{z}_{s-}\alpha(\omega,s)dM_s,
\end{equation*}
where $\mathfrak{z}_{s-}=\lim\limits_{s'\uparrow s}\mathfrak{z}_s'$,
and adapted process $\alpha(\omega,t)$ is replaced by its
predictable version. The square integrable  martingale $M_t$ has
paths the Skorokhod space $ \mathbb{D}_{[0,\infty)}. $ Denote
$\langle M\rangle_t$ the predictable quadratic variation of $M_t$
and $M_{t-}= \lim_{t'\uparrow t}M_{t'}$.

We assume $\int_0^t\alpha^2(\omega,s)d\langle M\rangle_s<\infty$. A
positiveness of $\mathfrak{z}_t$ is warranted by assumptions
$\alpha(\omega,t)\ge 0$ and $ \big(M_t-M_{t-}\big) I_{\{M_t\ne
M_{t-}\}}>0. $

 We choose $M_t$
in a form of It\^o's integral

\begin{equation*}
M_t=\int_0^t\int_{\mathbb{R}_+}z\big[\mu(ds,dz)-\nu(ds,dz)\big]
\end{equation*}
relative to, so called, martingale difference ``$\mu-\nu$'', where
$\mu(dt,dz)$ is the integer-valued measure $\mu=\mu(dt,dz)$
associated with a jump process $\triangle M_t=M_t-M_{t-}$ of $M_t$
and $\nu(dt,dz)$ is a compensator of $\mu(dt,dz)$. In order to have
the above-mentioned properties of $M_t$, we choose a deterministic
compensator
\begin{equation*}
\nu(dt,dz)=K(dz)dt
\end{equation*}
with a measure $K(dz)$ supported on $\mathbb{R}_+$ and
$\int_{\mathbb{R}_+}z^2K(dz)<\infty$. In particular, then,
$$
\langle M\rangle_t\equiv\E M^2_t\equiv\int_0^tz^2K(dz)ds.
$$

Our main result is formulated in
\begin{theorem}\label{theo-00}
Assume $\int_{\mathbb{R}_+}z^3K(dz)<\infty$. Then for any $T>0$ {\rm (}comp.
\eqref{eq:Benes}{\rm )}
\begin{equation*}
``|\alpha(\omega,t)|^2\le \text{\rm
const.}\big[1+\sup_{s\in[0,t]}M^2_{s-}\big]_{t\in[0,T]}\text{''}\Rightarrow
``\E\mathfrak{z}_T=1\text{''},
\end{equation*}

\end{theorem}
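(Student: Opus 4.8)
The plan is to run the four-step roadmap of Section~\ref{sec-1.2} essentially verbatim, replacing each Brownian object by its jump analogue and isolating the two places where the discontinuity genuinely alters the computation: the form of $\log\mathfrak{z}^n$ and the predictable quadratic variation of $M$ under the tilted measure. Throughout I write $\mathfrak{z}_t$ for the Dol\'eans-Dade exponential solving the jump equation of Section~1.3; since $M$ is purely discontinuous with jumps $\triangle M_s=z>0$ and compensator $K(dz)ds$, the explicit positive solution is
\begin{equation*}
\mathfrak{z}_t=\exp\Big(-\int_0^t\!\!\int_{\mathbb{R}_+}\alpha(\omega,s)zK(dz)ds\Big)\prod_{s\le t}\big(1+\alpha(\omega,s)\triangle M_s\big),
\end{equation*}
so that
\begin{equation*}
\log\mathfrak{z}_t=\int_0^t\!\!\int_{\mathbb{R}_+}\log\big(1+\alpha(\omega,s)z\big)\mu(ds,dz)-\int_0^t\!\!\int_{\mathbb{R}_+}\alpha(\omega,s)zK(dz)ds.
\end{equation*}

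\textbf{Steps 1--2.} As before set $\sigma_n=\inf\{t:1+\sup_{s\le t}M^2_{s-}\ge n\}$ (so $\sigma_n\uparrow\infty$ a.s. by c\`adl\`ag-ness), $M^n_t=M_{t\wedge\sigma_n}$ and $\mathfrak{z}^n_t=\mathfrak{z}_{t\wedge\sigma_n}$. On $\{\sigma_n\ge s\}$ the Bene\v{s} bound forces $\alpha^2\le\text{const}\cdot n$, hence the entropy integrand $[(1+\alpha z)\log(1+\alpha z)-\alpha z]\le\tfrac12\alpha^2z^2$ integrates to the deterministic bound $\tfrac12\,\text{const}\cdot nT\int z^2K(dz)<\infty$; a standard sufficient condition (of L\'epingle--M\'emin type) then yields $\E\mathfrak{z}^n_T=1$, requiring only $\int z^2K(dz)<\infty$. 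The Vall\'ee-Poussin reduction with $\psi(x)=x\log x+1-x$ and the identity $\E\psi(\mathfrak{z}^n_T)=\widetilde{\E}^n_T\log\mathfrak{z}^n_T$ are purely measure-theoretic and transfer unchanged, so it remains to bound $\sup_n\widetilde{\E}^n_T\log\mathfrak{z}^n_T$.

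\textbf{Step 3.} Under $\widetilde{\mathsf{P}}^n_T$ the Girsanov theorem for random measures changes the compensator of $\mu$ on $\{\sigma_n\ge s\}$ from $K(dz)ds$ to $(1+\alpha(\omega,s)z)K(dz)ds$. Compensating the jump integral in $\log\mathfrak{z}^n_T$ and collecting terms produces the clean identity
\begin{equation*}
\widetilde{\E}^n_T\log\mathfrak{z}^n_T=\widetilde{\E}^n_T\int_0^T\!\!\int_{\mathbb{R}_+}I_{\{\sigma_n\ge s\}}\psi\big(1+\alpha(\omega,s)z\big)K(dz)ds,
\end{equation*}
the integrand being precisely $\psi$ evaluated at $1+\alpha z$. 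The elementary inequality $\psi(1+u)=(1+u)\log(1+u)-u\le u^2/2$ for $u\ge0$ (whose difference has nonnegative derivative $u-\log(1+u)$) then reproduces the Brownian estimate
\begin{equation*}
\widetilde{\E}^n_T\log\mathfrak{z}^n_T\le\tfrac12\Big(\!\int z^2K(dz)\Big)\widetilde{\E}^n_T\!\int_0^TI_{\{\sigma_n\ge s\}}\alpha^2(\omega,s)ds\le\text{const}\Big[1+\int_0^TV^n_sds\Big],
\end{equation*}
where $V^n_t:=\widetilde{\E}^n_T\sup_{s'\le t\wedge\sigma_n}M^2_{s'-}$ and the final step invokes the Bene\v{s} hypothesis. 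Notably this step still needs only the second moment $\int z^2K(dz)<\infty$.

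\textbf{Step 4 (the obstacle).} It remains to bound $\sup_nV^n_T$. Writing $\widetilde{M}_t=\int_0^t\!\int z(\mu-\widetilde\nu)$ for the $\widetilde{\mathsf{P}}^n_T$-martingale part, the tilt yields the drift decomposition $M^n_t=\widetilde{M}^n_t+\int_0^tI_{\{\sigma_n\ge s\}}\alpha(\omega,s)(\int z^2K(dz))ds$, the analogue of \eqref{eq:BBB}; its drift is handled by Cauchy-Schwarz and the Bene\v{s} bound exactly as in the Brownian case, giving $\text{const}[1+\int_0^tV^n_sds]$. For the martingale part Doob's $L^2$ inequality gives $\widetilde{\E}^n_T\sup_{s'\le t\wedge\sigma_n}\widetilde{M}^2_{s'}\le4\widetilde{\E}^n_T\langle\widetilde{M}^n\rangle_{t\wedge\sigma_n}$, and here the discontinuity bites: under the new compensator
\begin{equation*}
\langle\widetilde{M}^n\rangle_{t\wedge\sigma_n}=\int_0^{t\wedge\sigma_n}\!\Big[\int z^2K(dz)+\alpha(\omega,s)\!\int z^3K(dz)\Big]ds,
\end{equation*}
so the hypothesis $\int z^3K(dz)<\infty$ is exactly what makes the quadratic variation finite. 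This $\alpha\int z^3K$ term has no Brownian counterpart; bounding $\alpha\le\text{const}(1+\sup|M_{s-}|)$ and using $\widetilde{\E}^n_T\sup|M_{s-}|\le(V^n_s)^{1/2}\le1+V^n_s$ converts it into one more $\text{const}[1+\int_0^tV^n_sds]$ contribution. Combining the three estimates gives the same integral inequality $V^n_t\le\mathbf{r}[1+\int_0^tV^n_sds]$ as before, so Gronwall yields $\sup_nV^n_T\le\mathbf{r}e^{T\mathbf{r}}<\infty$. Feeding this back into Step~3 gives $\sup_n\E\psi(\mathfrak{z}^n_T)<\infty$, whence $\{\mathfrak{z}^n_T\}$ is uniformly integrable; since $\mathfrak{z}^n_T\to\mathfrak{z}_T$ a.s., $\E\mathfrak{z}_T=\lim_n\E\mathfrak{z}^n_T=1$. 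The genuine obstacle, and the reason Theorem~\ref{theo-00} must assume $\int z^3K(dz)<\infty$ while the Brownian statement needs no analogue, is precisely this extra third-moment term appearing in $\langle\widetilde{M}\rangle$ after the change of measure.
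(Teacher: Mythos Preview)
Your proposal is correct and follows the paper's four-step roadmap essentially verbatim: the same stopping time $\sigma_n$, the same Vall\'ee-Poussin reduction via $\psi$, the same Girsanov decomposition $M^n=A^n+\widetilde{M}^n$ with $A^n_t=\int z^2K(dz)\int_0^tI_{\{\sigma_n\ge s\}}\alpha\,ds$ and $\langle\widetilde{M}^n\rangle_t=\int_0^t\int z^2(1+\alpha z)K(dz)ds$, and the same Gronwall close in Step~4.

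The one genuine variation is your handling of Step~3. The paper bounds $\log\mathfrak{z}^n_t\le\int_0^tI_{\{\sigma_n\ge s\}}\alpha\,dM^n_s$ via the crude inequality $\log(1+x)-x\le0$, then decomposes $dM^n=dA^n+d\widetilde{M}^n$ and must verify that $\int\alpha\,d\widetilde{M}^n$ is a true $\widetilde{\mathsf{P}}^n_T$-martingale; that verification (the ``$QV$'' bound in Lemma~2.3) already consumes the third-moment hypothesis through an $\alpha^3$ term. Your route---compensating $\int\log(1+\alpha z)\,\mu$ under the tilted measure to obtain the exact identity $\widetilde{\E}^n_T\log\mathfrak{z}^n_T=\widetilde{\E}^n_T\int\!\!\int\psi(1+\alpha z)K(dz)ds$ and then invoking $\psi(1+u)\le u^2/2$---is cleaner and, as you observe, requires only $\int z^2K(dz)<\infty$ at this stage (the compensated integral of $\log(1+\alpha z)$ is a true martingale because $(1+u)[\log(1+u)]^2\le Cu^2$). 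This isolates the role of $\int z^3K(dz)<\infty$ entirely in Step~4, which is a slightly sharper accounting than the paper's.
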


The method of proof is similar to one given in Section
\eqref{sec-1.2}

\section{\bf The proof of Theorem \ref{theo-00}}
\label{sec-2.z}

\subsection{Preliminaries}

We begin  with recalling  necessary notions (for more
details, see e.g. \cite{LSMar} or \cite{JSn}). Along this paper a
filtered probability space
 $(\varOmega, \mathcal{F},
(\mathscr{F}_t)_{t\in[0,\infty)},\mathsf{P})$ with ``general
conditions'' is fixed and all random objects are defined on it.
$\mathscr{P}$ denotes predictable $\sigma$-algebra relative to
$(\mathscr{F}_t)_{t\in[0,\infty)}$ and $\mathscr{B}_+$ is the Borel
$\sigma$-algebra on $\mathbb{R}_+$.

Henceforth, $\mathbf{r}$ denotes a generic constant taking different
values at different appearances and is independent of a number $n$ involved in the text.

We begin with know implication:
\begin{equation}\label{eq:=1}
 ``\alpha(\omega,t)\le \mathbf{r}\text{''}\Rightarrow
``\E\mathfrak{z}_t\equiv 1 \text{''}.
\end{equation}

Set $\tau_n=\inf\{t:\mathfrak{z}_{t-} \ge n\}$ and notice
$\mathfrak{z}_{(t\wedge\tau_n)-}\le n$. Then $\mathfrak{z}^n_t:=
\mathfrak{z}_{t\wedge\tau_n}$ solves the Doleans-Dade equation
\begin{equation}\label{eq:3.1d}
\mathfrak{z}^n_t=1+\int_0^t\mathfrak{z}^n_{s-}I_{\{\tau_n\ge
s\}}\alpha(\omega,s)dM_s.
\end{equation}
A boundedness of $I_{\{\tau_n\ge
s\}}\mathfrak{z}^n_{s-}\alpha(\omega,s)$ guarantees the process
$\mathfrak{z}^n_t$ is the square integrable martingale and $
\E(\mathfrak{z}^n_t)^2= 1+\E\int_0^t\big(I_{\{\tau_n\ge
s\}}\mathfrak{z}^n_{s-}\alpha(\omega,s)\big)^2d\langle M\rangle_sds
\le 1+\mathbf{r}\int_0^t\E(\mathfrak{z}^n_s)^2ds. $ Then, a function
$ V^n_t=\E(\mathfrak{z}^n_t)^2 $ solves the integral inequality: $
V^n_t\le 1+\mathbf{r}\int_0^tV^n_sds. $ So, by the Bellman-Gronwall
inequality, $V^n_t\le e^{\mathbf{r}t}$, that is, $
\sup_n\E(\mathfrak{z}^n_t)^2\le e^{\mathbf{r}t}. $ Therefore, by the
Vall\'ee-Poussin's criteria, the family
\{$\mathfrak{z}^n_t)\}_{n\to\infty}$ is uniformly integrable. So,
not only $ \lim_{n\to\infty}\mathfrak{z}^n_t=\mathfrak{z}_t $ but
also $ \E\mathfrak{z}_t=\lim_{n\to\infty}\E\mathfrak{z}^n_t\equiv 1.
$

\subsection{$\b{\mathfrak{z}^n_t}$ approximation of $\b{ \mathfrak{z}_t}$.
Change of probability measure}

\begin{lemma}\label{lem-2.2z}
Let $\sigma_n=\inf\big\{t:\big[1+\sup_{s\in[0,t]}M^2_{s-}\big]\ge
n\big\}$ and $\mathfrak{z}^n_t=\mathfrak{z}_{t\wedge\sigma_n}$. Then
$\E\mathfrak{z}^n_t\equiv 1$.
\end{lemma}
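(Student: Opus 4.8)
The plan is to reduce the statement to the implication \eqref{eq:=1} already proved in the preliminaries, namely that a bounded integrand forces $\E\mathfrak{z}_t\equiv 1$. The sole role of the stopping time $\sigma_n$ is to manufacture exactly this boundedness out of the Bene\v{s} hypothesis; once boundedness is in hand, the martingale property of the stopped exponential is immediate.

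First I would record that $\mathfrak{z}^n_t=\mathfrak{z}_{t\wedge\sigma_n}$ is itself a Dol\'eans--Dade exponential. Since stopping a stochastic integral inserts the indicator $I_{\{\sigma_n\ge s\}}$ into the integrand and $\mathfrak{z}^n_{s-}=\mathfrak{z}_{s-}$ on $\{\sigma_n\ge s\}$, the stopped process solves
\begin{equation*}
\mathfrak{z}^n_t=1+\int_0^t\mathfrak{z}^n_{s-}\,\alpha^n(\omega,s)\,dM_s,\qquad \alpha^n(\omega,s):=I_{\{\sigma_n\ge s\}}\alpha(\omega,s).
\end{equation*}
By uniqueness of the solution of the Dol\'eans--Dade equation, $\mathfrak{z}^n$ is precisely the exponential associated with the integrand $\alpha^n$, which is predictable because $s\mapsto I_{\{\sigma_n\ge s\}}=I_{[0,\sigma_n]}(s)$ is left-continuous and adapted. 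Next I would verify that $\alpha^n$ is bounded. Writing $J_t:=1+\sup_{s\in[0,t]}M^2_{s-}$, this process is nondecreasing, adapted and \emph{left-continuous} (the running supremum of the left-continuous process $s\mapsto M^2_{s-}$ is again left-continuous, since $M^2_{t-}=\lim_{s\uparrow t}M^2_{s-}\le\sup_{s<t}M^2_{s-}$). Hence $\sigma_n=\inf\{t:J_t\ge n\}$ is a stopping time, and left-continuity gives $J_{\sigma_n}=\lim_{u\uparrow\sigma_n}J_u\le n$; as $J$ is nondecreasing, on $\{\sigma_n\ge s\}$ we obtain $J_s\le J_{\sigma_n}\le n$, so the Bene\v{s} hypothesis yields
\begin{equation*}
\big(\alpha^n(\omega,s)\big)^2=I_{\{\sigma_n\ge s\}}\alpha^2(\omega,s)\le \mathbf{r}\,J_s\le \mathbf{r}\,n .
\end{equation*}
Thus $\alpha^n$ is bounded by $\sqrt{\mathbf{r}\,n}$, and applying \eqref{eq:=1} to the integrand $\alpha^n$ gives $\E\mathfrak{z}^n_t\equiv 1$, which is the claim.

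The step I expect to be delicate is the boundedness argument, and this is exactly where the discontinuous setting departs from the Brownian one. Because both $\sigma_n$ and the Bene\v{s} bound are read off the left limits $M_{s-}$ rather than $M_s$, one must exploit the left-continuity of the running supremum $J_t$ in order to conclude $J_s\le n$ throughout $\{\sigma_n\ge s\}$; had the stopping level been read off the c\`adl\`ag path $M_s$ itself, a jump at $\sigma_n$ could overshoot the level $n$ and the integrand would no longer be bounded by $\mathbf{r}\,n$. Everything else is the routine localization and uniqueness bookkeeping for Dol\'eans--Dade exponentials.
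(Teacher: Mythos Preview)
Your proof is correct and follows exactly the same route as the paper: write the stopped process as a Dol\'eans--Dade exponential with integrand $I_{\{\sigma_n\ge s\}}\alpha(\omega,s)$, show this integrand is bounded via the Bene\v{s} hypothesis together with $\big[1+\sup_{s\in[0,t\wedge\sigma_n]}M^2_{s-}\big]\le n$, and invoke \eqref{eq:=1}. Your version is in fact more careful than the paper's, since you supply the left-continuity argument for $J_t$ that justifies $J_{t}\le n$ on $\{\sigma_n\ge t\}$, which the paper asserts without proof.
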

\begin{proof}
We use \eqref{eq:3.1d} with $\tau_n$ replaced by $\sigma_n$:
\begin{equation}\label{eq:sigman}
\mathfrak{z}^n_t=1+\int_0^tI_{\{\sigma_n\ge
s\}}\mathfrak{z}^n_{s-}\alpha(\omega,s)dM_s.
\end{equation}
Since
$
\big[1+\sup\limits_{s\in[0,t\wedge\sigma_n]}M^2_{s-}\big]\le n
$
and, then,
$ I_{\{\sigma_n\ge t\}}|\alpha(\omega,t)|^2\le
\mathbf{r}I_{\{\sigma_n\ge
t\}}\big[1+\sup\limits_{s\in[0,t]}M^2_{s-}\big]\le\mathbf{r}n$,
it remains to apply \eqref{eq:=1}.
\end{proof}

Let $T>0$ be fixed.
By Lemma \ref{lem-2.2z} $\E\mathfrak{z}^n_T=1.$
As in Section \ref{sec-1.2}, we have to show the family
$\{\mathfrak{z}^n_T\}_{n\to\infty}$ is uniformly  integrable. So, we intend to
to verify $\sup_n\E\psi(\mathfrak{z}^n_t)<\infty $ with the
function $\psi(x)$ defined in \eqref{eq:psiips}.

Repeating arguments from Section \ref{sec-1.2}, we introduce
a probability measure $\widetilde{\mathsf{P}}^n_T\ll \mathsf{P}$
with
$d\widetilde{\mathsf{P}}^n_T=\mathfrak{z}^n_Td\mathsf{P}$ ($\widetilde{\E}^n_T$ denotes
the expectation
symbol of $\widetilde{\mathsf{P}}^n_T$).
Set $M^n_t=M_{t\wedge\sigma_n}$ and rewrite \eqref{eq:sigman} in equivalent form
\begin{equation}\label{eq:sigmanz}
\mathfrak{z}^n_t=1+\int_0^tI_{\{\sigma_n\ge
s\}}\mathfrak{z}^n_{s-}\alpha(\omega,s)dM^n_s.
\end{equation}
The random process $(M^n_t)_{t\in[0,T]}$ is
$\mathsf{P}$ - square integrable martingale. Since
$\widetilde{\mathsf{P}}^n_T\ll\mathsf{P}$, the process
$(M^n_t)_{t\in[0,T]}$ is $\widetilde{\mathsf{P}}^n_T$-semimartingale
obeying the unique decomposition $ M^n_t=A^n_t+\widetilde{M}^n_t $ with predictable
drift $A^n_t$ and local martingale $\widetilde{M}^n_t$ (see, e.g.
\cite{LSMar}, Ch.4, \S5, Theorem 2). Denote
$\widetilde{\nu}^n_T(dt,dz)$ a compensator of $\mu(dt,dz)$ relative
to $\widetilde{\mathsf{P}}^n_T$.

\begin{lemma}\label{lem-2.4z0}

\mbox{}

{\bf 1.}
$
\widetilde{\nu}^n_T(ds,dz)=I_{\{\sigma_n\ge s\}}
\big(1+\alpha(\omega,s)z)K(dz)ds
$

{\bf 2.}
$
\widetilde{M}^n_t=M^n_t-A^n_t
$
is square integrable martingale with

\begin{equation}\label{eq:uzhe}
\langle \widetilde{M}^n\rangle_t=\int_0^t\int_{\mathbb{R}_+}z^2I_{\{\sigma_n\ge s\}}
\big(1+\alpha(\omega,s)z)K(dz)ds
\end{equation}

{\bf 3.}
$
A^n_t=\int_0^t\int_{\mathbb{R}_+}I_{\{\sigma_n\ge s\}}\alpha(\omega,s)z^2K(dz)ds
$
\end{lemma}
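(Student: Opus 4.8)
The plan is to recognize the density $\mathfrak{z}^n$ as a purely discontinuous Dol\'eans--Dade exponential driven by the compensated jump measure, and then invoke the change-of-measure theorem for integer-valued random measures. First I would rewrite \eqref{eq:sigmanz} in terms of $\mu-\nu$: since $dM^n_s=I_{\{\sigma_n\ge s\}}\int_{\mathbb{R}_+}z(\mu-\nu)(ds,dz)$, equation \eqref{eq:sigmanz} takes the form
\begin{equation*}
\mathfrak{z}^n_t=1+\int_0^t\int_{\mathbb{R}_+}\mathfrak{z}^n_{s-}W(s,z)(\mu-\nu)(ds,dz),\qquad W(s,z)=I_{\{\sigma_n\ge s\}}\alpha(\omega,s)z.
\end{equation*}
Because $\alpha\ge 0$ and $z>0$, the integrand $W$ is nonnegative, so at a jump $\mathfrak{z}^n_s=\mathfrak{z}^n_{s-}\big(1+W(s,\triangle M_s)\big)>0$ and $\mathfrak{z}^n$ is positive; by Lemma \ref{lem-2.2z} it is moreover a genuine martingale on $[0,T]$, so that $\widetilde{\mathsf{P}}^n_T$ is a bona fide probability measure with density process $\mathfrak{z}^n$.

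For part~{\bf 1} I would apply Girsanov's theorem for random measures (see \cite{LSMar}, Ch.~4, or \cite{JSn}, Ch.~III). Its content here can be made explicit: for a nonnegative predictable $g(s,z)$, using that $\mathfrak{z}^n$ is a martingale and $\mathfrak{z}^n_s=\mathfrak{z}^n_{s-}(1+W)$ along jumps,
\begin{equation*}
\widetilde{\E}^n_T\int_0^T\!\!\int_{\mathbb{R}_+}g\,\mu(ds,dz)=\E\int_0^T\!\!\int_{\mathbb{R}_+}\mathfrak{z}^n_{s-}(1+W)g\,\nu(ds,dz)=\widetilde{\E}^n_T\int_0^T\!\!\int_{\mathbb{R}_+}(1+W)g\,\nu(ds,dz),
\end{equation*}
which identifies the $\widetilde{\mathsf{P}}^n_T$-compensator of $\mu$ as $(1+W)\nu=\big(1+I_{\{\sigma_n\ge s\}}\alpha(\omega,s)z\big)K(dz)ds$; on the set $\{\sigma_n\ge s\}$ carrying the jumps of the stopped process $M^n$ this coincides with the asserted $\widetilde{\nu}^n_T=I_{\{\sigma_n\ge s\}}(1+\alpha(\omega,s)z)K(dz)ds$. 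Here the compensator $\nu(ds,dz)=K(dz)ds$ has no atom in the time variable (quasi-left-continuity), which is what gives the clean factor $1+W$ with no atomic correction. This is the step I expect to be the main obstacle, since it is where the abstract change-of-measure result must be matched to the exponential structure and its hypotheses (positivity $W>-1$, integrability of $W$ against $\nu$, the martingale property of $\mathfrak{z}^n$) verified.

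Parts~{\bf 2} and~{\bf 3} then follow by decomposing $M^n$ under $\widetilde{\mathsf{P}}^n_T$. Writing
\begin{equation*}
M^n_t=\int_0^t\!\!\int_{\mathbb{R}_+}zI_{\{\sigma_n\ge s\}}(\mu-\widetilde{\nu}^n_T)(ds,dz)+\int_0^t\!\!\int_{\mathbb{R}_+}zI_{\{\sigma_n\ge s\}}(\widetilde{\nu}^n_T-\nu)(ds,dz),
\end{equation*}
the first term $\widetilde{M}^n_t$ is a $\widetilde{\mathsf{P}}^n_T$-local martingale and the second is predictable, which is the unique decomposition. Substituting $\widetilde{\nu}^n_T-\nu=I_{\{\sigma_n\ge s\}}\alpha(\omega,s)zK(dz)ds$ and using $I_{\{\sigma_n\ge s\}}^2=I_{\{\sigma_n\ge s\}}$ gives $A^n_t=\int_0^t\int_{\mathbb{R}_+}I_{\{\sigma_n\ge s\}}\alpha(\omega,s)z^2K(dz)ds$, which is part~{\bf 3}. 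For part~{\bf 2}, since $\widetilde{\nu}^n_T$ is nonatomic in time the predictable quadratic variation of the compensated integral $\widetilde{M}^n$ (with integrand $zI_{\{\sigma_n\ge s\}}$) is $\langle\widetilde{M}^n\rangle_t=\int_0^t\int_{\mathbb{R}_+}z^2I_{\{\sigma_n\ge s\}}\widetilde{\nu}^n_T(ds,dz)=\int_0^t\int_{\mathbb{R}_+}z^2I_{\{\sigma_n\ge s\}}(1+\alpha(\omega,s)z)K(dz)ds$, establishing \eqref{eq:uzhe}. Square-integrability of $\widetilde{M}^n$ is exactly where the hypothesis $\int_{\mathbb{R}_+}z^3K(dz)<\infty$ is used: on $\{\sigma_n\ge s\}$ one has $\alpha(\omega,s)\le\sqrt{\mathbf{r}n}$, whence $\widetilde{\E}^n_T\langle\widetilde{M}^n\rangle_t\le t\big[\int_{\mathbb{R}_+}z^2K(dz)+\sqrt{\mathbf{r}n}\int_{\mathbb{R}_+}z^3K(dz)\big]<\infty$.
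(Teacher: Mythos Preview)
Your proposal is correct and follows essentially the same route as the paper: you identify the jump ratio $\mathfrak{z}^n_s/\mathfrak{z}^n_{s-}=1+W(s,\triangle M_s)$, use it in a test-function computation to pin down $\widetilde{\nu}^n_T$, and then obtain $A^n$ and $\langle\widetilde{M}^n\rangle$ by adding and subtracting $\widetilde{\nu}^n_T$ in the integral representation of $M^n$, invoking uniqueness of the predictable decomposition and the moment condition $\int z^3K(dz)<\infty$ for square integrability. The only cosmetic difference is that the paper spells out each intermediate equality in the chain $\widetilde{\E}^n_T\!\int g\,\mu=\E\!\int\mathfrak{z}^n_{s-}(1+W)g\,\nu=\widetilde{\E}^n_T\!\int(1+W)g\,\nu$ (passing from $\mathfrak{z}^n_T$ to $\mathfrak{z}^n_s$ to $\mathfrak{z}^n_{s-}\cdot\frac{\mathfrak{z}^n_s}{\mathfrak{z}^n_{s-}}$ and back), whereas you compress these into one line with a verbal justification.
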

\begin{proof} {\bf 1.}
Below, we will use a formula
$$
\frac{\mathfrak{z}^n_{s}}{\mathfrak{z}^n_{s-}}I_{\{\sigma_n\ge s\}}
=I_{\{\sigma_n\ge s\}}\big(1+\alpha(\omega,s)\triangle
M^n_s)
$$
readily derived from \eqref{eq:sigmanz}.
Let $u(\omega,t,z)$ be bounded and
$\mathscr{P}\otimes\mathscr{B}(\mathbb{R}_+)$-measurable function
vanishing in a vicinity of $\{0\}$. Write
\begin{align*}
&
\widetilde{\E}^n_T\int_0^T\int_{\mathbb{R}_+}I_{\{\sigma_n\ge
s\}}u(\omega,s,z)\mu(dt,dz)
\\
&=\E\mathfrak{z}^n_T\int_0^T\int_{\mathbb{R}_+}I_{\{\sigma_n\ge
s\}}u(\omega,s,z)\mu(dt,dz)
\\
&=\E\int_0^T\int_{\mathbb{R}_+}\mathfrak{z}^n_sI_{\{\sigma_n\ge
s\}}u(\omega,s,z)\mu(dt,dz)
\\
&=\E\int_0^T\int_{\mathbb{R}_+}\mathfrak{z}^n_{s-}
\frac{\mathfrak{z}^n_{s}}{\mathfrak{z}^n_{s-}}I_{\{\sigma_n\ge
s\}}u(\omega,s,z)\mu(dt,dz)
\\
&=\E\int_0^T\int_{\mathbb{R}_+}\mathfrak{z}^n_{s-}
[1+\alpha(\omega,s)\triangle M^n_s]I_{\{\sigma_n\ge
s\}}u(\omega,s,z)\mu(dt,dz)
\\
&=\E\int_0^T\int_{\mathbb{R}_+}\mathfrak{z}^n_{s-}I_{\{\sigma_n\ge
s\}}\big[1+\alpha(\omega,s)z]u(\omega,s,z)\mu(ds,dz)
\\
&=\E\int_0^T\int_{\mathbb{R}_+}\mathfrak{z}^n_{s-}I_{\{\sigma_n\ge
s\}} \big[1+\alpha(\omega,s)z]u(\omega,s,z)K(dz)ds
\\
&=\E\mathfrak{z}^n_T\int_0^T\int_{\mathbb{R}_+}I_{\{\sigma_n\ge s\}}
\big[1+\alpha(\omega,s)z]u(\omega,s,z)K(dz)ds
\\
&=\widetilde{\E}^n_T\int_0^T\int_{\mathbb{R}_+}I_{\{\sigma_n\ge s\}}
\big[1+\alpha(\omega,s)z]u(\omega,s,z)K(dz)ds.
\end{align*}

The desired  result follows by arbitrariness of $u(\omega,t,z)$ .

{\bf 2.+3.} Recall $\int_{\mathbb{R}_+}(z^2\vee z^3)K(dz)<\infty$. Hence
$
\widetilde{\E}^n_T\int_0^T\int_{\mathbb{R}_+}z^2\widetilde{\nu}^n(ds,dz)<\infty.
$
Therefore
$$
\widehat{M}^n_t=\int_0^t\int_{\mathbb{R}_+}zI_{\{\sigma_n\ge s\}}
[\mu(ds,dz)-\widetilde{\nu}^n_T(ds,dz)
$$
is $\widetilde{\mathsf{P}}^n_T$-square integrable martingale with
predictable quadratic variation process

\begin{equation}\label{eq:uhUh}
\langle \widehat{M}^n\rangle_t=\int_0^t\int_{\mathbb{R}_+}z^2I_{\{\sigma_n\ge s\}}
\big(1+\alpha(\omega,s)z)K(dz)ds.
\end{equation}

On the other hand,
 \begin{gather*}
M^n_t-\widehat{M}^n_t=\int_0^t\int_{\mathbb{R}_+}I_{\{\sigma_n\ge s\}}
z[\widetilde{\nu}^n_T(ds,dz)-K(dz)]ds
 \\
 =\int_0^t\int_{\mathbb{R}_+}I_{\{\sigma_n\ge s\}}z^2\alpha(\omega,s)K(dz)ds=:A^n_t,
 \end{gather*}
where $A^n_t$ is well defined predictable process.

Consequently, $\widetilde{M}^n_t\equiv \widehat{M}^n_t$ in view of the unique
semimartingale decomposition with the predictable drift.
\end{proof}

\subsection{Upper bound of $\b{\E\psi(\mathfrak{z}^n_T)}$}

Following the main idea of Section \ref{sec-1.2}, we have to show the family
$\{\mathfrak{z}^n_T\}_{n\to\infty}$ is uniformly integrable.
To this end, we have to prove
$
\sup_n\E\psi(\mathfrak{z}^n_T)<\infty
$
for the function
$
\psi(x)=x\log(x)+1-x, \ x\ge 0.
$
We show first

\begin{equation*}
\widetilde{\E}^n_T\log(\mathfrak{z}
^n_T)<\infty
\end{equation*}
and, then, use an obvious identity
$
\E\psi(\mathfrak{z}^n_T)=\widetilde{\E}^n_T\log(\mathfrak{z}^n_T).
$

\begin{lemma}\label{lem-2.3}
$$
\widetilde{\E}^n_T\log(\mathfrak{z}
^n_T)\le\widetilde{\E}^n_T\int_0^T\int_{\mathbb{R}_+}I_{\{\sigma_n\ge
s\}}z^2\alpha^2(\omega,s)K(dz)ds\le\mathbf{r}
\Big[1+\widetilde{\E}^n_T\sup_{s\in[0,T\wedge\sigma_n]}(M^n_{s-})^2\Big].
$$
\end{lemma}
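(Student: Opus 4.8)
The plan is to evaluate $\log(\mathfrak{z}^n_T)$ explicitly from the Dol\'eans--Dade equation \eqref{eq:sigmanz}, then transfer the expectation $\widetilde{\E}^n_T$ onto the $\widetilde{\mathsf{P}}^n_T$-compensator $\widetilde{\nu}^n_T$ furnished by Lemma~\ref{lem-2.4z0}, and finally reduce the first inequality to an elementary scalar estimate. Because $M_t$, and hence $M^n_t$, is purely discontinuous, the stochastic exponential solving \eqref{eq:sigmanz} carries no continuous quadratic-variation term, so writing $N^n_t=\int_0^t I_{\{\sigma_n\ge s\}}\alpha(\omega,s)\,dM^n_s$ with nonnegative jumps $\triangle N^n_s=I_{\{\sigma_n\ge s\}}\alpha(\omega,s)\triangle M^n_s$ one has $\mathfrak{z}^n_t=\exp(N^n_t)\prod_{s\le t}(1+\triangle N^n_s)e^{-\triangle N^n_s}>0$. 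Taking logarithms, rewriting the jump sum as a $\mu$-integral, and cancelling the $\int I_{\{\sigma_n\ge s\}}\alpha z\,\mu$ contributions gives the pathwise identity
$$\log(\mathfrak{z}^n_T)=\int_0^T\!\!\int_{\mathbb{R}_+}\log\big(1+I_{\{\sigma_n\ge s\}}\alpha(\omega,s)z\big)\,\mu(ds,dz)-\int_0^T\!\!\int_{\mathbb{R}_+}I_{\{\sigma_n\ge s\}}\alpha(\omega,s)z\,K(dz)\,ds.$$

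Next I split the $\mu$-integral into its $\widetilde{\mathsf{P}}^n_T$-compensated martingale $m^n_t=\int_0^t\int_{\mathbb{R}_+}\log(1+I_{\{\sigma_n\ge s\}}\alpha z)\,[\mu-\widetilde{\nu}^n_T]$ and its compensator part. By part {\bf 1} of Lemma~\ref{lem-2.4z0}, on $\{\sigma_n\ge s\}$ the measure $\widetilde{\nu}^n_T$ equals $(1+\alpha(\omega,s)z)K(dz)\,ds$, and since both integrals above effectively live on $\{\sigma_n\ge s\}$, the compensator term and the $\int I_{\{\sigma_n\ge s\}}\alpha z\,K\,ds$ term merge to yield
$$\widetilde{\E}^n_T\log(\mathfrak{z}^n_T)=\widetilde{\E}^n_T m^n_T+\widetilde{\E}^n_T\int_0^T\!\!\int_{\mathbb{R}_+}I_{\{\sigma_n\ge s\}}\big[(1+\alpha z)\log(1+\alpha z)-\alpha z\big]K(dz)\,ds,$$
with a nonnegative bracket for $\alpha z\ge 0$.

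The crux, and the step I expect to be the main obstacle, is to show $\widetilde{\E}^n_T m^n_T=0$. In the Brownian sketch of Section~\ref{sec-1.2} boundedness of $I_{\{\sigma_n\ge s\}}\alpha^2$ alone killed the stochastic integral; here the change of measure tilts the jump intensity by the factor $(1+\alpha z)$, so the predictable quadratic variation of $m^n$ is $\int_0^T\int_{\mathbb{R}_+}\log^2(1+I_{\{\sigma_n\ge s\}}\alpha z)\,\widetilde{\nu}^n_T$. Estimating $\log(1+\alpha z)\le\alpha z$ and using $\alpha^2\le\mathbf{r}n$ on $\{\sigma_n\ge s\}$, this is bounded by $\mathbf{r}\int_0^T\int_{\mathbb{R}_+}(\alpha^2z^2+\alpha^3z^3)\,K(dz)\,ds$, which is finite precisely because of the standing assumption $\int_{\mathbb{R}_+}(z^2\vee z^3)K(dz)<\infty$; this is exactly where the cubic moment is consumed. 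Hence $m^n$ is a genuine square-integrable $\widetilde{\mathsf{P}}^n_T$-martingale and $\widetilde{\E}^n_T m^n_T=0$.

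With the martingale term eliminated, the first inequality reduces to the scalar bound $(1+x)\log(1+x)-x\le x^2$ for all $x\ge 0$, applied with $x=\alpha(\omega,s)z$ (so $x^2=\alpha^2 z^2$); this holds since both sides vanish at $x=0$ and $\frac{d}{dx}[(1+x)\log(1+x)-x]=\log(1+x)\le x\le 2x=\frac{d}{dx}x^2$. The second inequality then follows from the hypothesis $\alpha^2(\omega,s)\le\mathbf{r}\big[1+\sup_{s'\in[0,s]}M^2_{s'-}\big]$: factoring out $\int_{\mathbb{R}_+}z^2K(dz)<\infty$ and bounding $\sup_{s'\in[0,s]}M^2_{s'-}\le\sup_{s'\in[0,T\wedge\sigma_n]}(M^n_{s'-})^2$ on $\{\sigma_n\ge s\}$, the remaining $ds$-integration over $[0,T]$ contributes only a factor $T$ absorbed into $\mathbf{r}$.
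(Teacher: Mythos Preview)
Your argument is correct. It differs from the paper's in one tactical choice that is worth recording.

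The paper starts from the same explicit Dol\'eans--Dade formula but immediately uses the pointwise bound $\log(1+x)-x\le 0$ to discard the jump--correction sum, obtaining $\log(\mathfrak{z}^n_T)\le\int_0^T I_{\{\sigma_n\ge s\}}\alpha(\omega,s)\,dM^n_s$. It then invokes the semimartingale decomposition $M^n=A^n+\widetilde{M}^n$ of Lemma~\ref{lem-2.4z0}: the $\widetilde{M}^n$-integral is a $\widetilde{\mathsf P}^n_T$-square-integrable martingale (this is where the $z^3$-moment is spent, exactly as in your proof), and the $A^n$-integral is $\int_0^T\!\int_{\mathbb R_+}I_{\{\sigma_n\ge s\}}\alpha^2 z^2 K(dz)\,ds$ on the nose. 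So the paper estimates \emph{before} taking $\widetilde{\E}^n_T$, whereas you keep the exact identity, compensate $\log(1+\alpha z)$ against $\widetilde{\nu}^n_T$ directly, and only then bound the resulting relative-entropy density via $(1+x)\log(1+x)-x\le x^2$. Your route yields the exact formula $\widetilde{\E}^n_T\log(\mathfrak{z}^n_T)=\widetilde{\E}^n_T\int\!\int I_{\{\sigma_n\ge s\}}\psi(\alpha z)K(dz)\,ds$ with $\psi$ from \eqref{eq:psiips}, which is conceptually pleasant and makes the Kullback--Leibler structure visible; the paper's route is marginally shorter since it never needs to rewrite the jump sum as a $\mu$-integral or prove the scalar inequality. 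In both arguments the martingale to be killed has predictable quadratic variation controlled by $\int(\alpha^2 z^2+\alpha^3 z^3)K(dz)\,ds$, so the use of the cubic moment is identical.
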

\begin{proof}

It is well known (see e.g. \cite{LSMar}, Ch. 2, \S4), the
Doleans-Dade equation \eqref{eq:sigmanz} obeys the unique solution
\begin{align*}
&\mathfrak{z}^n_t=\exp\Big(\int_0^t\int_0^tI_{\{\sigma_n\ge
s\}}\alpha(\omega,s)dM^n_s
\\
&\quad+\sum_{s\in[0,t\wedge\sigma_n]}\log
\Big\{\big[1+\alpha(\omega,s)\triangle M^n_s\big]
-\alpha(\omega,s)\triangle M^n_s\Big\}\Big).
\end{align*}
Recall
$
\alpha(\omega,s)\triangle M^n_s\ge 0.
$
Then
$
\log[1+\alpha(\omega,s)\triangle M^n_s]-\alpha(\omega,s)\triangle M^N_s\le 0
$
and, therefore,
$
\log\big(\mathfrak{z}^n_t)\le\int_0^tI_{\{\sigma_n\ge
s\}}\alpha(\omega,s)dM^n_s.
$
So, by Lemma \ref{lem-2.4z0},
\begin{equation*}
\log\big(\mathfrak{z}^n_T)\le\int_0^TI_{\{\sigma_n\ge
s\}}\alpha(\omega,s)dA^n_s+\int_0^TI_{\{\sigma_n\ge
s\}}\alpha(\omega,s)d\widetilde{M}^n_s.
\end{equation*}
The process $\widetilde{M}^n_t$ is square integrable martingale
with $\langle \widetilde{M}^n\rangle_T$ defined in \eqref{eq:uzhe}.
Therefore, the It\^o integral
$
\int_0^tI_{\{\sigma_n\ge
s\}}\alpha(\omega,s)d\widetilde{M}^n_s
$
is also the square integrable martingale with the quadratic variation
$$
QV:=\int_0^T\int_{\mathbb{R}_+}z^2I_{\{\sigma_n\ge s\}}\alpha^2(\omega,s)
\big[1+\alpha(\omega,s)z]K(dz)ds.
$$
Let us show that QV is bounded by a constant depending on $n$. Since $\int_{\mathbb{R}_+}(z^2\vee z^3)K(dz)\le \mathbf{r}$,
$$
QV\le \mathbf{r}\int_0^TI_{\{\sigma_n\ge s\}}[1+\alpha^3(\omega,s)]ds
\le \mathbf{r}T+\mathbf{r}\int_0^TI_{\{\sigma_n\ge s\}}\alpha^3(\omega,s)ds.
$$
Further,
$$
I_{\{\sigma_n\ge s\}}\alpha^3(\omega,s)=
[I_{\{\sigma_n\ge s\}}\alpha^2(\omega,s)]^{3/2}
\le\mathbf{r}\Big[1+\sup_{s'\in[0,s\wedge\sigma_n]}M^2_{s'-}\Big]^{3/2}\le \mathbf{r}n^{3/2},
$$
that is, $QV\le\mathbf{r}T[1+n^{3/2}]$.

Hence, $\widetilde{\E}^n_T\int_0^TI_{\{\sigma_n\ge
s\}}\alpha(\omega,s)d\widetilde{M}^n_s=0$ and
$
\widetilde{\E}^n_T\log\big(\mathfrak{z}^n_T)\le\widetilde{\E}^n\int_0^TI_{\{\sigma_n\ge
s\}}\alpha(\omega,s)dA^n_s.
$
So, it remains to recall the formula of $A^n_t$ (see Lemma \ref{lem-2.4z0}),
and $\int_{\mathbb{R}_+}z^2K(dz)$, and
$$
I_{\{\sigma_n\ge s\}}\alpha^2(\omega,s)=
I_{\{\sigma_n\ge s\}}\alpha^2(\omega,s)
\le\mathbf{r}\Big[1+\sup_{s'\in[0,s\wedge\sigma_n]}M^2_{s'-}\Big].
$$
\end{proof}

\subsection{Final step of the proof}
Now, we are in the position to compute $
\widetilde{\E}^n_T\sup\limits_{s\in[0,T]}|M^n_s|^2. $ The use of
$M^n_t=A^n_t+\widetilde{M}^n_t$ implies $
\widetilde{\E}^n_T\sup\limits_{t'\in[0,t]}|M^n_{t'}|^2 \le 2
\widetilde{\E}^n_T\sup\limits_{t'\in[0,t]}|A^n_{t'}|^2
+2\widetilde{\E}^n_T\sup\limits_{t'\in[0,t]}|\widetilde{M}^n_{t'}|^2.
$

In view of statement {\bf 3.} of Lemma \ref{lem-2.4z0}
$A^n_t=\int_{\mathbb{R}_+}z^2K(dz)\int_0^tI_{\{\sigma_n\ge
s\}} \alpha(\omega,s)ds$. So, by applying the
Cauchy-Schwarz inequality we obtain
\begin{equation}\label{eq:KS}
\widetilde{\E}^n_T\sup_{t'\in[0,t]}|A^n_{t'}|^2\le\mathbf{r}\widetilde{\E}^n_T
\int_0^tI_{\{\sigma_n\ge
s\}} \alpha^2(\omega,s)ds.
\end{equation}
Further, by the Doob maximal inequality and \eqref{eq:uhUh} we find
that
\begin{gather}
\widetilde{\E}^n_T\sup_{t'\in[0,t]}|\widetilde{M}^n_{t'}|^2 \le
4\widetilde{\E}^n_T\langle \widetilde{M}^n\rangle_t
=4\widetilde{\E}^n_T \int_0^t\int_{\mathbb{R}_+}z^2I_{\{\sigma_n\ge
s\}} \big(1+\alpha(\omega,s)z)K(dz)ds
\nonumber\\
\le\mathbf{r} \widetilde{\E}^n_T \int_0^tI_{\{\sigma_n\ge s\}}
\big[1+\alpha(\omega,s)\big]ds\le \mathbf{r} \widetilde{\E}^n_T
\int_0^tI_{\{\sigma_n\ge s\}} \big[1+\alpha^2(\omega,s)\big]ds
\label{eq:Doob}
\end{gather}
Now, a combination of \eqref{eq:KS}, and \eqref{eq:Doob} provides:
for any $t\le T$,
\begin{equation*}
\widetilde{\E}^n_T\sup_{t'\in[0,t]}|M^n_{t'}|^2\le
\mathbf{r}\widetilde{\E}^n_T\int_0^t[1+\alpha^2(\omega,s)]ds \le
\mathbf{r}\widetilde{\E}^n_T\int_0^t\Big[1+\sup_{s'\in[0,s]}|M^n_{s'}|^2\Big]ds.
\end{equation*}
Hence, the function
$V^n_t:=\sup_{n}\widetilde{\E}^n_T\sup_{s\in[0,t]}|M^n_{s}|^2$
solves an integral inequality:
$$
``V^n_t\le \mathbf{r}\Big(1+\int_0^tV^n_sds\Big)\text{''}\Rightarrow
``V^n_T\le \mathbf{r}e^{\mathbf{r}T}\text{''}.
$$
\qed


\begin{thebibliography}{9}

\bibitem{Benes} Benes, V.E. (1971)  Existence of optimal stochastic control laws
\emph{SIAM J. of Control}, 9 , 446-475

\bibitem{Girs} Girsanov, I.V. (1960) On transforming a certan class of stochastic processes by absolutely continuous
substitution of measures. \emph{Theory Probab. Appl.} {\bf 5},
285-301.

\bibitem{JSn} Jacod J., Shiryaev A.N.:  Limit theorems for stochastic
processes. 2nd ed. Springer-Verlag, Berlin (2003)

\bibitem{Hits}  Hitsuda Masuyuki. (1968) Representation of Gaussian processes
equivalent to Wiener process. \emph{Osaka J. Math.} {\bf 5},
299-312.

\bibitem{142} Karatzas, I. and Shreve, S.E. (1991): Brownian Motion
and Stochastic Calculus. Springer-Verlag, New York Berlin
Heidelberg.


\bibitem{7} Kazamaki, N. (1977) On a problem of Girsanov.// T\^ohoku Math. J., 29 , p. 597-
600.

\bibitem{Kry} Krylov, N.V. (8 May 2009) A simple proof of a result of A. Novikov.
arXiv:math/020713v2 [math.PR]

\bibitem{LSMar}  Liptser, R.Sh., Shiryayev, A.N. (1989) \emph{Theory of
Martingales.} Kluwer Acad. Publ.

\bibitem{LSI}  Liptser, R. Sh. and Shiryaev, A. N. (2000).
\emph{ Statistics of Random Processes {\rm I}}, 2nd ed.,  Springer,
Berlin - New York.

\bibitem{12} Novikov,  A.A. (1979) On the conditions of the uniform integrability of
the continuous nonnegative martingales.// Theory of Probability and
its Applications, 24, No. 4, p. 821-825.

\bibitem{UZ} "Ust\"unel. A and Zakai, M. (2000) \emph{Transformation of measure on Wiener
space.} Springer, Berlin - New York.
\end{thebibliography}
\end{document}